\newcommand{\sign}{\text{sign}}
\newtheorem{proposition}{Proposition}[section]
\newtheorem{definition}{Definition}[section]
\newtheorem{theorem}{Theorem}[section]
\newtheorem{lemma}{Lemma}[section]
 \theoremstyle{plain}
\pgfplotsset{compat=1.14}
\numberwithin{equation}{section}
\begin{document}

\begin{center}
\Large{\textbf{Existence of solutions to the continuous Redner–Ben-Avraham–Kahng coagulation equation}}
\end{center}
\medskip
\medskip
\centerline{${\text{Pratibha~Verma $^1$}}$, ${\text{Ankik ~ Kumar ~Giri$^{1*}$}}$ }\let\thefootnote\relax\footnotetext{$^*$Corresponding author. Tel +91-1332-284818 (O);  Fax: +91-1332-273560  \newline{\it{${}$ \hspace{.3cm} Email address: }}ankik.giri@ma.iitr.ac.in}
\medskip
{\footnotesize

  \centerline{ ${}^{1}$Department of Mathematics, Indian Institute of Technology Roorkee,}
   \centerline{Roorkee-247667, Uttarakhand, India}

}

\bigskip

\section*{ABSTRACT}
We take into account a coagulation model that simulates a distinct kind of dynamics. In this model, two particles collide to produce a single particle, but the resulting particle decreases in size, allowing each particle to be fully identified by its size. It is demonstrated that the corresponding evolving integral partial differential equation has solutions for product-type coagulation kernels i.e. $0 \le \mathfrak{K}(\varrho, \varsigma)= \mathfrak{K}(\varsigma, \varrho)=r(\varsigma) r(\varrho)+\alpha(\varsigma, \varrho), (\varsigma, \varrho)\in \mathbb{R}_+^2$ and $\sup_{\varsigma \in [0,R]} \frac{\mathfrak{K}(\varsigma, \varrho)}{\varrho} \to 0 \ \mbox{as}  \ \varrho \to \infty$.

\medskip

\textbf{KeyWords :} Coagulation; Annihilation; Redner–Ben-Avraham–Kahng Coagulation System; Existence.\\

\textbf{2020 Mathematics subject classification}: 45J05, 45K05, 47G20, 34K30, 45G10.

\section{INTRODUCTION}

In many scientific fields, including polymer chemistry, colloid science, cloud dynamics, and star formation, particles grow irreversibly by successively merging clusters of particles. In the past, much research has been done on the kinetics of coagulation/aggregation models. Coagulation is a process that combine particles, molecules, or substances to produce a bigger mass or clump. This can happen physically or chemically and has been seen in a variety of domains, including biology, chemistry, and materials research. The coagulation process can be conceptually expressed as follows:

\begin{figure}[htb]
    \centering
    \resizebox{0.5\textwidth}{!}{
\begin{tikzpicture}
\draw (0,0) circle (0.7cm) node{$\varsigma$};
\draw (1.1,0) -- (1.3,0);
\draw (1.2,0.1) -- (1.2,-0.1);
\draw (2.4,0) circle (0.7cm) node{$\varrho$};
\draw[thick,->] (3.4,0) -- (4.6,0);
\draw (4,0.25) node{$\mathfrak{K}(\varsigma, \varrho)$};
\draw (5.7,0) circle (0.7cm) node{$\varsigma + \varrho$};
\end{tikzpicture}
 }
  \end{figure}

where $\mathfrak{K}(\varsigma, \varrho)=\mathfrak{K}(\varrho, \varsigma) \ge 0$ is the coagulation kernel, which indicates the rate at which clusters of size $\varsigma \in \mathbb{R}_+$ and $\varrho \in \mathbb{R}_+$ coalesce. We will use the notations $\mathbb{R}_+= (0, \infty)$ and $\mathbb{R}_+^2=\mathbb{R}_+ \times \mathbb{R}_+$ throughout this paper. The Smoluchowski's coagulation equation \cite{Muller, Smoluchowski} is the most widely used mean-field model for modelling the coagulation process in continuous and discrete sense respectively. 

A particle and its corresponding antiparticle colliding and converting into energy is known as the annihilation process. A particle and its antiparticle completely destroy one another when they come into contact. Particles and antiparticles have opposite charges and other quantum qualities that are equal in magnitude but opposite in sign, which causes this process to happen. The annihilation process is crucial for comprehending the behaviour of subatomic particles and is employed in many contexts, such as positron emission tomography (PET) scans for medical imaging and studies that investigate the characteristics of matter and antimatter. The following is a conceptual explanation of the annihilation process:

\begin{figure}[htb]
    \centering
    \resizebox{0.5\textwidth}{!}{
\begin{tikzpicture}
\draw (0,0) circle (0.7cm) node{$e^-$};
\draw (0,-2) circle (0.7cm) node{$e^+$};
\draw[thick,->] (1,0) -- (3,-0.9);
\draw[thick,->] (1,-2) -- (3,-1.1);
\filldraw[color=blue, fill=blue, very thick] (4,-1) circle (0.7cm);
\end{tikzpicture}
 }
  \end{figure}
  
Let us now focus our attention to the mathematical formulation of the model under consideration in this paper. To begin, we consider a closed system of particles undergoing binary collisions in which the smaller particle totally annihilates and destroys the identical quantity of the larger particle. The conceptual explanation of such type of coagulation is given in following diagram.
\begin{figure}[htb]
    \centering
    \resizebox{0.5\textwidth}{!}{
\begin{tikzpicture}
\draw (0,0) circle (0.7cm) node{$\varsigma$};
\draw (1.1,0) -- (1.3,0);
\draw (1.2,0.1) -- (1.2,-0.1);
\draw (2.4,0) circle (0.7cm) node{$\varrho$};
\draw[thick,->] (3.4,0) -- (4.6,0);
\draw (4,0.25) node{$\mathfrak{K}(\varsigma, \varrho)$};
\draw (5.7,0) circle (0.7cm) node{$| \varsigma - \varrho |$};
\end{tikzpicture}
 }
  \end{figure}
  
In such a case, the following two results are possible:
\begin{itemize}
\item production of particles of mass $\varsigma \in \mathbb{R}_+$ by coalescence of particles with masses $\varrho \in \mathbb{R}_+$ and $\varsigma+\varrho$.

\item disappearance of particles of mass $\varsigma \in \mathbb{R}_+$, due to their coalescence with particles of mass $\varrho \in \mathbb{R}_+$.
\end{itemize}

Redner et al. provided the first description of a finite dimensional model based on above concept in \cite{Redner1}. This model is known as the finite dimensional discrete Redner–Ben-Avraham–Kahng (RBK) coagulation model. In order to understand the dynamics of vicious civilizations, this model has been studied in \cite{Redner2}. Later, the existence and uniqueness of solutions for the infinite-dimensional discrete RBK coagulation model were demonstrated by da Costa et al. in \cite{dacosta1} under very general conditions on the coagulation coefficients. They also demonstrated the solutions' differentiability and their ongoing dependence on the supplied data. Additionally, several remarkable invariance properties were proven. Finally, a study of the solutions' long-term behaviour as well as an initial assessment of their scaling behaviour were carried out. The authors of \cite{dacosta2} studied the large-time behaviour of solutions to the discrete RBK coagulation system with non-negative compactly supported input data, which, due to the aforementioned invariance properties, convert this system into a finite-dimensional differential equation.

The continuous RBK coagulation equation appears as follows:
\begin{eqnarray}\label{crbk}
\frac{\partial \mathfrak{f}}{\partial \tau}=\int_0^{\infty} \mathfrak{K}(\varsigma+\varrho,\varrho) \mathfrak{f}(\varsigma+\varrho,\tau) \mathfrak{f}(\varrho,\tau) d\varrho- \int_0^{\infty} \mathfrak{K}(\varsigma,\varrho) \mathfrak{f}(\varsigma,\tau) \mathfrak{f}(\varrho,\tau) d\varrho
\end{eqnarray}
with initial condition
\begin{eqnarray}\label{ic}
\mathfrak{f}(\varsigma,0)= \mathfrak{f}_0(\varsigma)\ge 0
\end{eqnarray}
where $ \mathfrak{f}(\varsigma, \tau)$ represents the particle concentration of volume $\varsigma \in \mathbb{R}_+$ at time $\tau \ge 0$. The non-negative quantity $\mathfrak{K}(\varsigma,\varrho)$ depicts the coagulation rate at which particles of volume $\varsigma$ interact with particles of volume $\varrho$ to generate particles of volume $|\varsigma-\varrho|$. The rate `$\mathfrak{K}$' is referred to as the coagulation kernel or coagulation coefficient, which is assumed to be non-negative and symmetric i.e. $0 \le \mathfrak{K}(\varsigma, \varrho)= \mathfrak{K}(\varrho, \varsigma),~\forall~ (\varsigma, \varrho) \in \mathbb{R}_+^2$.

This paper primarily concentrates on the problem of weak solutions for the continuous RBK coagulation equation \eqref{crbk}-\eqref{ic}, which do not comply with the mass conservation condition. Several articles have been published on the subject of the existence and uniqueness of different type of solutions (weak solution, mild solution and classical solution etc.) to the Smoluchowski coagulation equation derived by employing various methodologies under diverse growth conditions on the coagulation kernel, see \cite{ball, Barik0, Barik1, Laurencot2, Laurencot1, Laurencot3, stewart1, stewart2}. In \cite{dacosta1, dacosta2}, da costa et al. addressed the discrete RBK coagulation model and presented results for existence, uniqueness, and some solution invariant features. The continuous RBK coagulation system, however, has not been extensively studied. The first result on the continuous RBK coagulation model is presented in \cite{ankik}, and it includes results on existence and uniqueness, as well as continuous dependency on beginning data and solution behaviour over a long period of time. In this article, we expanded upon the existing result from \cite{ankik}. The proofs of Theorem \ref{thmexist}, in particular, go along the lines of the existence proofs performed in \cite{Laurencot2} for the Smoluchowski coagulation equations.

The contents of this article in different sections are arranged as follows. Section 2 describes the assumptions on coagulation kernel to the existence of solution for \eqref{crbk}-\eqref{ic}. Next, in the same section we define mild solutions of \eqref{crbk}-\eqref{ic} along with the main results of the article. Section 3 is devoted to the proof of existence of at least one solution to the continuous RBK equation \eqref{crbk}-\eqref{ic} for coagulation kernel $`\mathfrak{K}'$ given in \eqref{ker}-\eqref{ker2} in space $X_{0,1}$.

\section{\textbf{SOME DEFINITION AND MAIN RESULTS}}
We make the following assumptions on $\mathfrak{K}$ which is non-negative and symmetric:
\begin{eqnarray}\label{ker}
0 \le \mathfrak{K}(\varrho, \varsigma)= \mathfrak{K}(\varsigma, \varrho)=r(\varsigma) r(\varrho)+\alpha(\varsigma, \varrho), \hspace{1.5cm}(\varsigma, \varrho)\in \mathbb{R}_+^2
\end{eqnarray}
where $r$ and $\alpha$ are non-negative functions satisfying
\begin{eqnarray}\label{ker1}
\left\{
  \begin{array}{ll}
    r \in \mathcal{C}(\mathbb{R}_+; \mathbb{R}_+),\ \ \ \ \ \ \ \alpha \in \mathcal{C}(\mathbb{R}_+^2; \mathbb{R}_+), \\
    0\le \alpha(\varsigma, \varrho)= \alpha(\varrho, \varsigma)\le Ar(\varsigma)r(\varrho),\ \ \ \ \ (\varsigma, \varrho) \in [1, +\infty)^2,
  \end{array}
\right.
\end{eqnarray}
for some positive real number $A$. Additionally, let us suppose that $\mathfrak{K}$ is strictly subquadratic, which means for each $R\ge 1$,
\begin{eqnarray}\label{ker2}
\omega_R(\varrho)=\sup_{\varsigma \in [0,R]} \frac{\mathfrak{K}(\varsigma, \varrho)}{\varrho} \to 0 \ \ \ \mbox{as} \ \ \  \varrho \to +\infty.
\end{eqnarray}
Finally, we suppose that the initial datum $\mathfrak{f}_0$ satisfies the condition
\begin{eqnarray}\label{ic1}
\mathfrak{f}_0\in X_{0,1}^+,
\end{eqnarray}
where $X_{0,1}^+$ is a positive cone of the Banach space
$$X_{0,1}=L^1((0, +\infty); (1+\varsigma) d\varsigma)$$
equipped with the norm $\|. \|_{0,1}$ as it is stated
$$\|x\|_{0,1}=\int_0^{\infty} (1+\varsigma) |x(\varsigma)| d \varsigma,\ \ \ \ \ x\in X_{0,1}.$$
Thus, $$X_{0,1}^+= \{x\in X_{0,1}, x \ge 0 \}.$$

Let us now define the notion of mild solution to the continuous RBK equation \eqref{crbk}-\eqref{ic}.

\begin{definition}\label{defmild}
Let $T\in (0,\infty]$. Assume that $\mathfrak{f}_0$ satisfies \eqref{ic1} and coagulation kernel `$\mathfrak{K}$' satisfies \eqref{ker}-\eqref{ker2}. 
A non-negative real-valued function $\mathfrak{f} : [0,T) \to X_{0,1}^+$ that satisfies \eqref{space1}, \eqref{space2} and \eqref{msol} for every $\tau\in [0,T)$ is said to be a mild solution of \eqref{crbk}-\eqref{ic} on $[0,T)$.

\begin{eqnarray}\label{space1}
\mathfrak{f}\in \mathcal{C}([0,T); L^1(0, \infty))\cap L^{\infty}(0, T; X_{0,1}),
\end{eqnarray}

\begin{eqnarray}\label{space2}
(\varrho,\mathfrak{s}) \mapsto \mathfrak{K}(\varsigma , \varrho) \mathfrak{f} (\varsigma ,\mathfrak{s}) \mathfrak{f} (\varrho ,\mathfrak{s}) \in L^1((0, \infty) \times (0,\tau)),
\end{eqnarray}
and for almost every $\varsigma \in \mathbb{R}_+$,
 \begin{eqnarray}\label{msol}
 \mathfrak{f}(\varsigma,\tau)&=&\mathfrak{f}_0(\varsigma)+\int_0^\tau \int_0^{\infty} \mathfrak{K}(\varsigma+\varrho,\varrho) \mathfrak{f}(\varsigma+\varrho,\mathfrak{s}) \mathfrak{f}(\varrho,\mathfrak{s}) d\varrho d\mathfrak{s} \nonumber\\
 & &- \int_0^\tau \int_0^{\infty} \mathfrak{K}(\varsigma,\varrho) \mathfrak{f}(\varsigma,\mathfrak{s}) \mathfrak{f}(\varrho,\mathfrak{s}) d\varrho d\mathfrak{s} .
\end{eqnarray}
\end{definition}

Our main result is given by following theorem:
\begin{theorem}\label{thmexist}
  If \eqref{ker}-\eqref{ker2} holds true for the coagulation kernel `$\mathfrak{K}$' for every $\mathfrak{f}_0\in X_{0,1}^+$, then there is at least one mild solution $\mathfrak{f}$ for \eqref{crbk}-\eqref{ic} on $[0,\infty)$ satisfying
\begin{eqnarray}\label{mass}
  \int_0^{\infty} \varsigma \mathfrak{f} (\varsigma,\tau) d\varsigma &\le & \int_0^{\infty} \varsigma \mathfrak{f}_0 (\varsigma) d\varsigma, \ \ \ \ \ \tau\in [0, \infty).
\end{eqnarray}
\end{theorem}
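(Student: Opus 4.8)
The plan is to adapt the classical truncation-and-weak-compactness scheme of Laurençot \cite{Laurencot2} to the RBK structure. First I would regularise the problem by truncating the kernel and confining the size variable to $(0,n)$, replacing \eqref{crbk} by the family
\begin{equation*}
\partial_\tau \mathfrak{f}_n = \int_0^{n} \mathfrak{K}_n(\varsigma+\varrho,\varrho)\,\mathfrak{f}_n(\varsigma+\varrho)\,\mathfrak{f}_n(\varrho)\,d\varrho - \int_0^n \mathfrak{K}_n(\varsigma,\varrho)\,\mathfrak{f}_n(\varsigma)\,\mathfrak{f}_n(\varrho)\,d\varrho,
\end{equation*}
where $\mathfrak{K}_n=\mathfrak{K}\,\mathbf{1}_{[0,n]^2}$ and $\mathfrak{f}_n(\varsigma+\varrho)$ is set to $0$ for $\varsigma+\varrho>n$. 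For fixed $n$ the right-hand side is locally Lipschitz on $L^1(0,n)$, so the Banach fixed point theorem yields a unique local solution; non-negativity is preserved because the gain term is non-negative and the loss term is proportional to $\mathfrak{f}_n(\varsigma)$ (quasi-positivity), while the $L^1$-bound obtained below upgrades the solution to a global one.

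Next I would extract the two structural monotonicities of the RBK dynamics. Testing against the weights $1$ and $\varsigma$ and rewriting the gain term by the substitution $\varsigma'=\varsigma+\varrho$ turns it into a double integral over $\{\varsigma'>\varrho\}$; subtracting the loss term leaves the pointwise factor $\mathbf{1}_{\{\varsigma>\varrho\}}-1=-\mathbf{1}_{\{\varsigma\le\varrho\}}$ for the number and $\mathbf{1}_{\{\varsigma>\varrho\}}(\varsigma-\varrho)-\varsigma=-\min\{\varsigma,\varrho\}$ for the mass. Both are $\le 0$, so the total number $\int\mathfrak{f}_n\,d\varsigma$ and the mass $\int\varsigma\,\mathfrak{f}_n\,d\varsigma$ are non-increasing. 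This gives a uniform bound for $\{\mathfrak{f}_n\}$ in $L^\infty(0,\infty;X_{0,1})$ and in particular the tightness estimate $\int_{\varsigma>R}\mathfrak{f}_n\,d\varsigma\le R^{-1}\int_0^\infty\varsigma\,\mathfrak{f}_0\,d\varsigma$. The remaining ingredient for Dunford--Pettis, uniform integrability in $\varsigma$, I would secure by a de la Vall\'ee-Poussin argument: choose a convex $\Phi$ with $\Phi(r)/r\to\infty$ and $\int\Phi(\mathfrak{f}_0)\,d\varsigma<\infty$, and propagate a bound on $\int\Phi(\mathfrak{f}_n(\tau))\,d\varsigma$ via the convexity of $\Phi$ in the collision integrals.

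With these bounds I would pass to a limit. Equicontinuity in time for the weak $L^1$ topology follows directly from \eqref{msol}: tested against any $\psi\in L^\infty$, the increment $\int\psi\,(\mathfrak{f}_n(\tau_2)-\mathfrak{f}_n(\tau_1))\,d\varsigma$ is controlled by $\int_{\tau_1}^{\tau_2}$ of the (uniformly bounded) collision integrals. A vector-valued Arzel\`a--Ascoli theorem---Dunford--Pettis supplying pointwise-in-time weak $L^1$ compactness and the time-equicontinuity supplying uniformity---then produces a subsequence converging to some $\mathfrak{f}\in\mathcal{C}([0,\infty);w\text{-}L^1)$, to which the uniform $X_{0,1}$-bound transfers; this gives \eqref{space1}--\eqref{space2}.

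The decisive step is the passage to the limit in the quadratic terms of \eqref{msol}, and this is where \eqref{ker1}--\eqref{ker2} enter. For $\varsigma$ in a bounded set $[0,R]$ the inner rate $Q_n(\varsigma,\mathfrak{s})=\int_0^\infty\mathfrak{K}(\varsigma,\varrho)\mathfrak{f}_n(\varrho,\mathfrak{s})\,d\varrho$ is, by \eqref{ker2}, dominated by $\int_0^\infty\varrho\,\omega_R(\varrho)\,\mathfrak{f}_n\,d\varrho$; splitting the $\varrho$-integral at a large $P$, the tail is made uniformly small via $\sup_{\varrho>P}\omega_R(\varrho)\to 0$ and the mass bound, while on $(0,P)$ the kernel is a bounded continuous test function, so weak $L^1$ convergence applies. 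Hence $Q_n\to Q$ pointwise and boundedly on bounded $\varsigma$-sets; combined with the uniform integrability of $\mathfrak{f}_n$ and the weak-times-(bounded, a.e.\ convergent) product rule, the loss term converges, and the gain term follows identically after $\varsigma'=\varsigma+\varrho$, with the multiplicative form $r(\varsigma)r(\varrho)+\alpha$ and the bound $\alpha\le A\,r(\varsigma)r(\varrho)$ of \eqref{ker1} providing the decoupling needed at large sizes. I expect exactly this product passage---turning the joint weak convergence of two $n$-dependent factors into convergence of their product---to be the main obstacle, and it is resolved precisely by the subquadratic tail control \eqref{ker2} together with the kernel structure \eqref{ker}. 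Finally, since $\mathfrak{f}\mapsto\int\varsigma\,\mathfrak{f}\,d\varsigma$ is weakly lower semicontinuous, the non-increasing mass of the approximants passes to the limit as the inequality \eqref{mass}.
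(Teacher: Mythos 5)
Your overall architecture coincides with the paper's: truncate the kernel and the data, derive the monotonicity of number and mass for the truncated problems, extract weak $L^1$ compactness in size plus time equicontinuity, apply a weak-$L^1$ Arzel\`a--Ascoli theorem, and pass to the limit in the collision integrals by splitting sizes into a bounded region (handled by the weak convergence and a bounded a.e.-convergent multiplier) and a large-size region (handled by the product structure $\mathfrak{K}\le(1+A)rr$ together with the time-integrated tail bound that comes from testing against $\min\{\varsigma,M\}$, i.e.\ your $-\min\{\varsigma,\varrho\}$ computation). All of that matches Lemmas \ref{l1}, \ref{equicontinuity} and Subsections 3.3--3.4, and your remark that the mass inequality survives only as a weak lower-semicontinuity inequality is exactly \eqref{mass1}.

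The one place where you genuinely depart from the paper is the uniform integrability in $\varsigma$, and it is also the one step I would not accept as written. You propose to propagate $\int\Phi(\mathfrak{f}^n(\tau))\,d\varsigma$ for a de la Vall\'ee-Poussin function $\Phi$. In the Smoluchowski equation this closes because the gain at a size $\varsigma\le R$ only draws on sizes $\le R$, where the kernel is bounded. In the RBK equation the gain at $\varsigma$ draws on the pair $(\varsigma+\varrho,\varrho)$ with $\varsigma+\varrho$ arbitrarily large: the convexity inequality $u\,\Phi'(v)\le\Phi(u)+C\Phi(v)$ applied to $\Phi'(\mathfrak{f}^n(\varsigma))\,\mathfrak{f}^n(\varsigma+\varrho,\cdot)$ produces the term
\begin{equation*}
\int_0^\infty\!\!\int_0^\infty \mathfrak{K}(\varsigma+\varrho,\varrho)\,\Phi\bigl(\mathfrak{f}^n(\varsigma+\varrho)\bigr)\,\mathfrak{f}^n(\varrho)\,d\varrho\,d\varsigma
+\int_0^\infty \Phi\bigl(\mathfrak{f}^n(\varsigma)\bigr)\Bigl(\int_0^\infty \mathfrak{K}(\varsigma+\varrho,\varrho)\,\mathfrak{f}^n(\varrho)\,d\varrho\Bigr)d\varsigma,
\end{equation*}
and under \eqref{ker}--\eqref{ker2} the factor $\int_0^\infty \mathfrak{K}(\varsigma+\varrho,\varrho)\,\mathfrak{f}^n(\varrho)\,d\varrho$ is not bounded uniformly in $\varsigma$ when $r$ is unbounded (e.g.\ $r(\varsigma)=\varsigma^{9/10}$ is admissible), nor is $\int r\,\Phi(\mathfrak{f}^n)\,d\varsigma$ among the quantities you control. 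So the Gronwall loop does not close without further ideas. The paper avoids this by working instead with the modulus $\vartheta^n_{a,\delta}(\tau)=\sup\{\int_0^a\mathds{1}_A\mathfrak{f}^n\,d\varsigma:|A|\le\delta\}$: the gain term is absorbed by the translation invariance $|A+\varrho|=|A|$ after the substitution $\varsigma'=\varsigma+\varrho$, the large-$\varsigma'$ contribution is killed by the subquadraticity constant $\Theta(a)$, and Gronwall is applied to $\vartheta^n_{a,\delta}$ itself (Lemma \ref{uni-int}). I recommend you replace the de la Vall\'ee-Poussin step by this translation argument, or else supply the missing weighted estimates; everything else in your plan goes through as in the paper.
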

It follows from previous theorem that for coagulation kernal $`\mathfrak{K}'$ and initial condition $\mathfrak{f}_0$ satisfies \eqref{ker}-\eqref{ker2} and \eqref{ic1}, respectively, there exists at least one mild solution to the continuous RBK equation \eqref{crbk}-\eqref{ic} in the sense of Definition \ref{defmild}.

\medskip
\section{\textbf{PROOF OF THEOREM \ref{thmexist} (EXISTENCE OF SOLUTIONS)}}

By following a weak $L^1$-compactness method that was first introduced in the classic work of Stewart \cite{stewart1}, we are able to establish the mild solutions to the equations \eqref{crbk}-\eqref{ic}. In order to demonstrate that Theorem \ref{thmexist} is true, we must first write the equations \eqref{crbk}-\eqref{ic} into the limit of a sequence of truncated equations. These equations are obtained by exchanging the collision kernel $\mathfrak{K}$ by their cut-off kernels $\mathfrak{K}_n$.
\medskip
\subsection{The truncated model}
let $(\Xi_n)_{n\ge 1}$ such that $0 \le \Xi_n \le 1$ denote a sequence of smooth cut-off functions defined as
\begin{eqnarray*}
\Xi_n(\varsigma)=\left\{
  \begin{array}{ll}
    1, \ \  & \mbox{for} \ \ 0 \le \varsigma \le n, \\
    0,  & \ \ \  \text{elsewhere}.
  \end{array}
\right.
\end{eqnarray*}
For $n \ge 1$, we establish an approximation sequence of $\mathfrak{K}$ by
\begin{eqnarray} \label{trunker1}
\mathfrak{K}_n(\varsigma, \varrho)=\mathfrak{K}(\varsigma, \varrho) \Xi_n(\varsigma) \Xi_n(\varrho),\ \ \ (\varsigma, \varrho)\in \mathbb{R}_+^2.
\end{eqnarray}
We can easily conclude that $\mathfrak{K}_n$ is non-negative and bounded continuous function on $\mathbb{R}_+^2$ which also satisfy
\begin{eqnarray}\label{trunker2}
\mathfrak{K}_n(\varsigma, \varrho) = r_n(\varsigma) r_n(\varrho)+\alpha_n(\varsigma, \varrho),
\end{eqnarray}
where,
\begin{eqnarray}
0 \le \alpha_n(\varsigma, \varrho) \le A r_n(\varsigma) r_n(\varrho),\ \ \ \ (\varsigma, \varrho) \in[1, +\infty)^2, \label{alpha}\\
r_n(\varsigma)= r(\varsigma) \Xi_n(\varsigma), \ \ \ \alpha_n(\varsigma, \varrho) =\alpha(\varsigma, \varrho) \Xi_n(\varsigma) \Xi_n(\varrho).\label{alpha1}
\end{eqnarray}
Additionally, we examine a sequence of functions $\{ \mathfrak{f}_0^n \ge 0\}_{n\ge 1}$ in $\mathcal{D}(0, \infty)$ such that
\begin{eqnarray}\label{trunic1}
\mathfrak{f}_0^n(\varsigma)=\mathfrak{f}_0(\varsigma) \ \Xi_n(\varsigma),
\end{eqnarray}
which satisfy
\begin{eqnarray}
\lim_{n \to \infty}\|\mathfrak{f}_0^n -\mathfrak{f}_0\|_{0,1}=0, \label{trunic2}
\end{eqnarray}
and
\begin{eqnarray}
\mathbb{G}_0:= \sup_{n \ge 1} \|\mathfrak{f}_0^n\|_{0,1} <\infty. \label{trunic3}
\end{eqnarray}

By using truncation on kernel and initial condition as in \eqref{trunker1} and \eqref{trunic1}, respectively, we get the following truncated system:
\begin{eqnarray}\label{trunc}
 \frac{\partial \mathfrak{f}^n}{\partial \tau}(\varsigma,\tau)=\int_0^{n-\varsigma}\mathfrak{K}_n(\varsigma+\varrho,\varrho)\mathfrak{f}^n(\varsigma+\varrho,\tau)\mathfrak{f}^n(\varrho,\tau)d\varrho -\int_0^n \mathfrak{K}_n(\varsigma,\varrho) \mathfrak{f}^n(\varsigma,\tau) \mathfrak{f}^n(\varrho,\tau)d\varrho,~~~
 \end{eqnarray}
where $(\varsigma,\tau)\in (0,n)\times [0, +\infty)$, having initial condition
\begin{eqnarray}\label{trunic4}
\mathfrak{f}^n(\varsigma,0)=\mathfrak{f}^n_0(\varsigma), ~~~~\varsigma\in \mathbb{R}_+
\end{eqnarray}
where $n\ge 1$ is an integer.
\begin{proposition}\label{Trunprop}
Let us consider \eqref{trunker1}-\eqref{alpha1} holds and $\mathfrak{f}^n_0(\varsigma) \in X_{0,1}^+$. Then a unique non-negative solution $\mathfrak{f}^n\in \mathcal{C}(0, +\infty) \cap L^{\infty}(0, T;L^1(0,n))$ to \eqref{trunc}-\eqref{trunic4} exists which satisfies,
\begin{eqnarray}\label{mtrun}
\mathfrak{f}^n(\varsigma,\tau)&=&\mathfrak{f}_0^n(\varsigma) + \int_0^\tau \int_0^{\infty} \mathfrak{K}_n(\varsigma+\varrho, \varrho) \mathfrak{f}^n(\varsigma +\varrho, \mathfrak{s}) \mathfrak{f}^n (\varrho, \mathfrak{s}) d\varrho d\mathfrak{s}\nonumber\\
& &-\int_0^{\tau} \int_0^{\infty} \mathfrak{K}_n(\varsigma, \varrho) \mathfrak{f}^n(\varsigma, \mathfrak{s}) \mathfrak{f}^n (\varrho, \mathfrak{s}) d\varrho d\mathfrak{s},
\end{eqnarray}
for $\varsigma \in \mathbb{R}_+$, $\tau \ge 0$ and $n\ge 1$.
\end{proposition}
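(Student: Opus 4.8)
The plan is to recast \eqref{trunc}--\eqref{trunic4} as an ordinary differential equation in the Banach space $L^1(0,n)$ and solve it by a fixed-point argument, exploiting the fact that the cut-off renders the kernel bounded. Since $\Xi_n$ is supported in $[0,n]$, the kernel $\mathfrak{K}_n$ is bounded, say $0\le \mathfrak{K}_n\le M_n$, and vanishes outside $[0,n]^2$; moreover any solution emanating from $\mathfrak{f}_0^n$, which is supported in $(0,n)$, stays supported in $(0,n)$, so it suffices to work in $L^1(0,n)$, on which the $X_{0,1}$ norm is equivalent to the usual $L^1$ norm. I would introduce the gain and loss operators
\[
\mathcal{Q}_+[\mathfrak{g}](\varsigma)=\int_0^{n-\varsigma}\mathfrak{K}_n(\varsigma+\varrho,\varrho)\mathfrak{g}(\varsigma+\varrho)\mathfrak{g}(\varrho)\,d\varrho,\qquad L[\mathfrak{g}](\varsigma)=\int_0^{n}\mathfrak{K}_n(\varsigma,\varrho)\mathfrak{g}(\varrho)\,d\varrho,
\]
so that \eqref{trunc} reads $\partial_\tau \mathfrak{f}^n=\mathcal{Q}_+[\mathfrak{f}^n]-\mathfrak{f}^n\,L[\mathfrak{f}^n]$. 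Because $\mathfrak{K}_n$ is bounded, both $\mathcal{Q}_+$ and the map $\mathfrak{g}\mapsto \mathfrak{g}\,L[\mathfrak{g}]$ are bounded bilinear forms on $L^1(0,n)$, hence Lipschitz continuous on every bounded ball.

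Next I would set up a Picard iteration in which positivity is built in through an integrating factor. Writing the loss term as a linear multiplicative contribution, the mild formulation is
\[
\mathfrak{f}^n(\varsigma,\tau)=\mathfrak{f}_0^n(\varsigma)\,e^{-\int_0^\tau L[\mathfrak{f}^n(\cdot,\sigma)](\varsigma)\,d\sigma}+\int_0^\tau e^{-\int_s^\tau L[\mathfrak{f}^n(\cdot,\sigma)](\varsigma)\,d\sigma}\,\mathcal{Q}_+[\mathfrak{f}^n(\cdot,s)](\varsigma)\,ds.
\]
Defining iterates by this formula with $\mathfrak{f}^{(0)}=\mathfrak{f}_0^n$, an easy induction shows every iterate is non-negative, since $\mathfrak{f}_0^n\ge 0$, the operator $\mathcal{Q}_+$ preserves non-negativity, and the exponential weights are positive. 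On a short interval $[0,T_0]$, with $T_0$ depending only on $M_n$, $n$ and $\|\mathfrak{f}_0^n\|_{0,1}$, the associated map is a contraction on the closed positive cone of $\mathcal{C}([0,T_0];L^1(0,n))$, so the Banach fixed-point theorem yields a unique non-negative local solution; differentiating the mild formula recovers \eqref{trunc}, and conversely integrating \eqref{trunc} in time gives \eqref{mtrun}.

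Finally I would extend the local solution to $[0,\infty)$ by means of an a priori bound. Integrating \eqref{trunc} over $\varsigma\in(0,n)$, performing the change of variables $\varsigma+\varrho\mapsto\varsigma$ in the gain term and using the symmetry of $\mathfrak{K}_n$, the gain integral equals exactly half the loss integral, whence
\[
\frac{d}{d\tau}\int_0^n \mathfrak{f}^n\,d\varsigma=-\frac12\int_0^n\!\!\int_0^n \mathfrak{K}_n(\varsigma,\varrho)\,\mathfrak{f}^n(\varsigma)\,\mathfrak{f}^n(\varrho)\,d\varrho\,d\varsigma\le 0 .
\]
Hence $\|\mathfrak{f}^n(\tau)\|_{L^1(0,n)}\le\|\mathfrak{f}_0^n\|_{L^1(0,n)}$ throughout the interval of existence, and since the support remains in $(0,n)$ this controls the full $X_{0,1}$ norm uniformly in time. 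Because the length $T_0$ of the local existence interval depends only on this norm, a standard continuation argument extends the solution to all of $[0,\infty)$, giving $\mathfrak{f}^n\in\mathcal{C}(0,\infty)\cap L^\infty(0,T;L^1(0,n))$ together with \eqref{mtrun}. The main obstacle is reconciling the contraction mapping with the non-negativity requirement: a bare Picard scheme for $\partial_\tau\mathfrak{f}^n=\mathcal{Q}_+[\mathfrak{f}^n]-\mathfrak{f}^n L[\mathfrak{f}^n]$ does not visibly preserve the cone, which is precisely why the integrating-factor reformulation, carrying the sign information in the positive exponential weights, is the right device.
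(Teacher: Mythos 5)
Your proposal is correct and follows essentially the same route as the paper, which delegates this proposition to \cite[Theorem 3.1]{ankik}: a contraction/Picard argument in $L^1(0,n)$ for the bounded, compactly supported kernel $\mathfrak{K}_n$, with the integrating-factor (exponential) reformulation to preserve non-negativity, and global continuation via the monotonicity of $\int_0^n \mathfrak{f}^n\,d\varsigma$ obtained from the gain-equals-half-the-loss identity. The only point worth tightening is uniqueness: the fixed point gives uniqueness only in the invariant ball, so one should add the standard Gronwall estimate for the difference of two arbitrary solutions in $L^{\infty}(0,T;L^1(0,n))$.
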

The proof of the above proposition is similar to that of \cite[Theorem 3.1]{ankik}.

Using the fact that the coagulation kernels $\mathfrak{K}_n$ are bounded and compactly supported in $\mathbb{R}_+$, we can conclude the following helpful identities by using \eqref{mtrun}.

Let us consider a measurable function $\phi$ on $\mathbb{R}_+$, then for $n \ge 1$ and $\tau >0$, the following holds true
\begin{eqnarray}\label{wtrun}
& & \hspace{-2.5cm}\int_0^{\infty} \phi(\varsigma)(\mathfrak{f}^n(\varsigma,\tau)-\mathfrak{f}_0^n(\varsigma)) d\varsigma \nonumber\\
&=&  \int_0^\tau \int_0^{\infty}\int_0^{\varsigma} \tilde{\phi}(\varsigma, \varrho) \mathfrak{K}_n(\varsigma, \varrho) \mathfrak{f}^n(\varsigma, \mathfrak{s}) \mathfrak{f}^n (\varrho, \mathfrak{s}) d\varrho d\varsigma d\mathfrak{s},
\end{eqnarray}
where $\tilde{\phi}(\varsigma, \varrho)= \phi(\varsigma-\varrho)-\phi(\varsigma)-\phi(\varrho)$ with $(\varsigma, \varrho) \in \mathbb{R}^2_+$, and
\begin{eqnarray}\label{masstrun}
\int_0^{\infty} \varsigma \mathfrak{f}^n(\varsigma, \tau)d\varsigma \le \int_0^{\infty} \varsigma \mathfrak{f}_0^n(\varsigma) d\varsigma.
\end{eqnarray}

\medskip
\subsection{A priori bounds}
Next, we consider that \eqref{trunker2}-\eqref{alpha1} hold then for $(\varsigma, \varrho)\in \mathbb{R}_+^2$, we have following estimates which are valid for all $n \ge 1$. Throughout this manuscript, we use some sequence of positive constants denoted by $(C_i)$. These $C_i$'s are depend on some parameter which we will indicate explicitly.

\begin{lemma}\label{l1}
Let us assume $0< T < \infty$, then for $n \ge M>0$, $\tau \in [0,T]$, the following holds
\begin{eqnarray}
\int_0^\tau \bigg(\int_M^{\infty} r_n(\varsigma) \mathfrak{f}^n( \varsigma, \mathfrak{s}) d\varsigma \bigg)^2 d\mathfrak{s} \le \frac{C_1}{M},\label{eql1.1}\\
\int_0^\infty \mathfrak{f}^n(\varsigma,\tau) d\varsigma \le \mathbb{G}_0, \label{eql1.2}\\
\int_0^\tau \bigg(\int_0^{\infty} r_n(\varsigma) \mathfrak{f}^n( \varsigma, \mathfrak{s}) d\varsigma \bigg)^2 d\mathfrak{s} \le 2\mathbb{G}_0. \label{eql1.3}
\end{eqnarray}
\end{lemma}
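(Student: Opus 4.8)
The engine for all three bounds is the weak identity \eqref{wtrun}, into which I will feed two test functions and then exploit only the non-negativity of $\mathfrak{K}_n$ and $\mathfrak{f}^n$ together with the structural lower bound $\mathfrak{K}_n(\varsigma,\varrho)\ge r_n(\varsigma) r_n(\varrho)$ read off from \eqref{trunker2}. The one preliminary observation is that, since $\mathfrak{f}_0^n\ge 0$, the uniform bound \eqref{trunic3} gives both $\int_0^\infty \mathfrak{f}_0^n(\varsigma)\,d\varsigma\le\mathbb{G}_0$ and $\int_0^\infty \varsigma\,\mathfrak{f}_0^n(\varsigma)\,d\varsigma\le\mathbb{G}_0$; these are the only features of the data I will need.

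For \eqref{eql1.2} I would take $\phi\equiv 1$, so $\tilde\phi(\varsigma,\varrho)=1-1-1=-1$ and \eqref{wtrun} reads $\int_0^\infty \mathfrak{f}^n(\varsigma,\tau)\,d\varsigma-\int_0^\infty \mathfrak{f}_0^n(\varsigma)\,d\varsigma=-\int_0^\tau\!\int_0^\infty\!\int_0^\varsigma \mathfrak{K}_n \mathfrak{f}^n(\varsigma,\mathfrak{s})\mathfrak{f}^n(\varrho,\mathfrak{s})\,d\varrho\,d\varsigma\,d\mathfrak{s}\le 0$, whence $\int_0^\infty\mathfrak{f}^n(\cdot,\tau)\le\int_0^\infty\mathfrak{f}_0^n\le\mathbb{G}_0$. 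For \eqref{eql1.3} I keep the same $\phi\equiv1$ but now retain the dissipation term; since the integrand $\mathfrak{K}_n(\varsigma,\varrho)\mathfrak{f}^n(\varsigma,\mathfrak{s})\mathfrak{f}^n(\varrho,\mathfrak{s})$ is symmetric in $(\varsigma,\varrho)$, the integral over $\{0<\varrho<\varsigma\}$ equals half the integral over $\mathbb{R}_+^2$, so the identity rearranges to
\begin{equation*}
\int_0^\tau\!\int_0^\infty\!\int_0^\infty \mathfrak{K}_n(\varsigma,\varrho)\,\mathfrak{f}^n(\varsigma,\mathfrak{s})\mathfrak{f}^n(\varrho,\mathfrak{s})\,d\varrho\,d\varsigma\,d\mathfrak{s}=2\Big(\int_0^\infty \mathfrak{f}_0^n-\int_0^\infty \mathfrak{f}^n(\cdot,\tau)\Big)\le 2\mathbb{G}_0.
\end{equation*}
Dropping $\alpha_n\ge0$ in $\mathfrak{K}_n(\varsigma,\varrho)=r_n(\varsigma)r_n(\varrho)+\alpha_n(\varsigma,\varrho)$ and factorising $\int_0^\infty\!\int_0^\infty r_n(\varsigma)r_n(\varrho)\mathfrak{f}^n\mathfrak{f}^n=\big(\int_0^\infty r_n\mathfrak{f}^n\big)^2$ then yields \eqref{eql1.3}.

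The estimate \eqref{eql1.1} is the substantive one and comes from the mass-type test function $\phi(\varsigma)=\varsigma$, which is admissible because $\mathfrak{K}_n$ is bounded with compact support and $\mathfrak{f}^n(\cdot,\mathfrak{s})\in X_{0,1}$. Here $\tilde\phi(\varsigma,\varrho)=(\varsigma-\varrho)-\varsigma-\varrho=-2\varrho$, and on the domain $\{\varrho<\varsigma\}$ one has $\varrho=\min(\varsigma,\varrho)$; after the same symmetrisation and invoking the mass control \eqref{masstrun},
\begin{equation*}
\int_0^\tau\!\int_0^\infty\!\int_0^\infty \min(\varsigma,\varrho)\,\mathfrak{K}_n(\varsigma,\varrho)\,\mathfrak{f}^n(\varsigma,\mathfrak{s})\mathfrak{f}^n(\varrho,\mathfrak{s})\,d\varrho\,d\varsigma\,d\mathfrak{s}=\int_0^\infty \varsigma\mathfrak{f}_0^n-\int_0^\infty \varsigma\mathfrak{f}^n(\cdot,\tau)\le\mathbb{G}_0.
\end{equation*}
Restricting the $\varsigma,\varrho$ integrations to $[M,\infty)^2$, where $\min(\varsigma,\varrho)\ge M$ and $\mathfrak{K}_n(\varsigma,\varrho)\ge r_n(\varsigma)r_n(\varrho)$, bounds the left-hand side from below by $M\int_0^\tau\big(\int_M^\infty r_n\mathfrak{f}^n\big)^2\,d\mathfrak{s}$, which gives \eqref{eql1.1} with $C_1=\mathbb{G}_0$.

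The conceptual crux — and the step I would check most carefully — is the passage from the one-sided dissipation integral over $\{\varrho<\varsigma\}$ to the full symmetric double integral, together with the recognition that the weight $\varrho$ produced by $\phi(\varsigma)=\varsigma$ is exactly $\min(\varsigma,\varrho)$, so that it may be bounded below by $M$ on the tail region $[M,\infty)^2$. Everything else reduces to the positivity of the kernel and the crude estimate $\mathfrak{K}_n\ge r_n(\varsigma)r_n(\varrho)$; the hypothesis $n\ge M$ merely keeps the tail non-trivial, since $r_n$ is supported in $[0,n]$ and \eqref{eql1.1} is vacuous once $M>n$.
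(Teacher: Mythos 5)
Your proof is correct. For \eqref{eql1.2} and \eqref{eql1.3} you follow exactly the paper's route: take $\phi\equiv1$ in \eqref{wtrun}, symmetrise the dissipation term by Fubini, and use $\mathfrak{K}_n(\varsigma,\varrho)\ge r_n(\varsigma)r_n(\varrho)$. For \eqref{eql1.1} you deviate in the choice of test function: the paper tests with the truncated mass $\phi(\varsigma)=\min\{\varsigma,M\}$ and verifies through a five-case computation that $\tilde\phi\le 0$ everywhere and $\tilde\phi\le -M$ on $(M,+\infty)^2$, whereas you test with $\phi(\varsigma)=\varsigma$ itself, for which $\tilde\phi(\varsigma,\varrho)=-2\varrho=-2\min(\varsigma,\varrho)$ on $\{\varrho<\varsigma\}$, and only afterwards restrict the symmetrised integral to $[M,+\infty)^2$ where $\min(\varsigma,\varrho)\ge M$. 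Both arguments land on the same intermediate bound $M\int_0^\tau\int_M^\infty\int_M^\infty \mathfrak{K}_n\,\mathfrak{f}^n\mathfrak{f}^n\,d\varrho\,d\varsigma\,d\mathfrak{s}\le \mathbb{G}_0$ and hence the same constant $C_1=\mathbb{G}_0$; yours trades the case analysis for the admissibility of the unbounded test function $\varsigma$, which you justify correctly (the kernel $\mathfrak{K}_n$ is compactly supported and $\mathfrak{f}^n(\cdot,\mathfrak{s})\in X_{0,1}$, so all integrals converge). Your closing remark that $n\ge M$ is not actually needed for the inequality, only to keep the left-hand side of \eqref{eql1.1} from vanishing identically, is also accurate.
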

\begin{proof}
First, put $\phi(\varsigma)= \min\{\varsigma, M\}$ in \eqref{wtrun}, then for $\varsigma>\varrho$, we have
\begin{eqnarray*}
\tilde{\phi}(\varsigma, \varrho) =
\left\{
  \begin{array}{ll}
      -2 \varrho  & \mbox{if} \ \ \ \varsigma \in [0,M], \varrho \in [0,M] \ \mbox{and} \ (\varsigma-\varrho) \in [0,M], \\
      (\varsigma-M)-2 \varrho  & \mbox{if} \ \ \ \varsigma \in (M,+\infty),  \varrho \in [0,M] \ \mbox{and} \ (\varsigma-\varrho) \in [0,M], \\
      - \varrho  & \mbox{if} \ \ \ \varsigma \in (M,+\infty),  \varrho \in [0,M] \ \mbox{and} \ (\varsigma-\varrho) \in (M,+\infty), \\
      (\varsigma-2M)- \varrho  & \mbox{if} \ \ \ \varsigma \in (M,+\infty),  \varrho \in (M,+\infty) \ \mbox{and} \ (\varsigma-\varrho) \in [0,M], \\
    -M  & \mbox{if} \ \ \ \varsigma \in (M,+\infty),  \varrho \in (M,+\infty) \ \mbox{and} \ (\varsigma-\varrho) \in (M, +\infty),
  \end{array}
\right.
\end{eqnarray*} 
which gives
\begin{eqnarray*}
\tilde{\phi}(\varsigma, \varrho) \le
\left\{
  \begin{array}{ll}
     0  & \mbox{if} \ \ \ \varsigma \in [0,M] \ \mbox{or} \ \varrho \in [0,M], \\
    -M  & \mbox{if} \ \ \ (\varsigma, \varrho) \in (M,+\infty)^2.
  \end{array}
\right.
\end{eqnarray*} 
Now, by using the bounds on $\tilde{\phi}$, we have
\begin{eqnarray*}
& &\int_0^{\infty} \phi(\varsigma) (\mathfrak{f}^n(\varsigma,\tau)-\mathfrak{f}^n_0(\varsigma)) d\varsigma\\
& \le & -M \int_0^\tau \int_M^{\infty} \int_M^{\infty} \mathfrak{K}_n(\varsigma, \varrho) \mathfrak{f}^n(\varsigma,\mathfrak{s}) \mathfrak{f}^n(\varrho,\mathfrak{s}) d\varrho d \varsigma d\mathfrak{s}
\end{eqnarray*}
which implies
\begin{eqnarray*}
\int_0^\tau \int_M^{\infty} \int_M^{\infty} \mathfrak{K}_n(\varsigma, \varrho) \mathfrak{f}^n(\varsigma,\mathfrak{s}) \mathfrak{f}^n(\varrho,\mathfrak{s}) d\varrho d \varsigma d\mathfrak{s} \le \frac{C_1}{M}.
\end{eqnarray*}
Finally, from \eqref{trunker2} and above inequality, we obtain
\begin{eqnarray*}
& & \hspace{-2.5cm}\int_0^\tau \bigg(\int_M^{\infty} r_n(\varsigma) \mathfrak{f}^n(\varsigma, \mathfrak{s}) d\varsigma\bigg)^2d\mathfrak{s}\\
& \le & \int_0^\tau \int_M^{\infty} \int_M^{\infty} \mathfrak{K}_n(\varsigma, \varrho) \mathfrak{f}^n(\varsigma,\mathfrak{s}) \mathfrak{f}^n(\varrho,\mathfrak{s}) d\varrho d \varsigma d\mathfrak{s},
\end{eqnarray*}
which gives the proof of \eqref{eql1.1}.

Next, put $\phi \equiv 1$ in \eqref{wtrun}, then $\tilde{\phi}=-1$. In that case we obtain
\begin{eqnarray*}
\int_0^{\infty} \mathfrak{f}^n (\varsigma, \tau) d\varsigma + \int_0^\tau \int_0^{\infty}\int_0^{\varsigma} \mathfrak{K}_n(\varsigma, \varrho) \mathfrak{f}^n(\varsigma, \mathfrak{s}) \mathfrak{f}^n(\varrho, \mathfrak{s}) d\varrho d\varsigma d\mathfrak{s} =\int_0^{\infty} \mathfrak{f}^n_0 (\varsigma) d\varsigma.
\end{eqnarray*}
Above equation along with Fubini's theorem entails that
\begin{eqnarray*}
\int_0^{\infty} \mathfrak{f}^n (\varsigma, \tau) d\varsigma + \frac{1}{2} \int_0^\tau \int_0^{\infty}\int_0^{\infty} \mathfrak{K}_n(\varsigma, \varrho) \mathfrak{f}^n(\varsigma, \mathfrak{s}) \mathfrak{f}^n(\varrho, \mathfrak{s}) d\varrho d\varsigma d\mathfrak{s} =\int_0^{\infty} \mathfrak{f}^n_0 (\varsigma) d\varsigma.
\end{eqnarray*}
By using \eqref{trunker2}, we get
\begin{eqnarray*}
\int_0^{\infty} \mathfrak{f}^n (\varsigma, \tau) d\varsigma + \frac{1}{2}\int_0^\tau \bigg(\int_0^{\infty} r_n(\varsigma) \mathfrak{f}^n(\varsigma, \mathfrak{s}) d\varsigma\bigg)^2 d\mathfrak{s} \le \int_0^{\infty} \mathfrak{f}^n_0 (\varsigma) d\varsigma.
\end{eqnarray*}
The above inequality along with \eqref{trunic3} implies \eqref{eql1.2} and \eqref{eql1.3}.
\end{proof}

\medskip

Before proceeding further, we would like to mention the given notations: for $n \ge 1$, $a \in (1, +\infty)$, $ \delta \in (0, +\infty)$, and $\tau \in [0, +\infty)$, we write
\begin{eqnarray*}
\vartheta_{a, \delta}^n(\tau)=\sup
\left\{
  \begin{array}{ll}
    \int_0^a \mathds{1}_A(\varsigma)\mathfrak{f}^n(\varsigma, \tau) d\varsigma \\
     A \mbox{ is measurable subset of } \mathbb{R}_+ \mbox{ with } |A| \le \delta
  \end{array}
\right \}.
\end{eqnarray*}
Here $\mathds{1}_A$ denotes the indicator function of $A$ and $|A|$ is Lebesgue measure of $A$.

\begin{lemma}\label{uni-int}
Let \eqref{ker2} holds. Also, consider $T \in (0, +\infty)$ and $a \in (1, +\infty)$ then for every $n \ge 1$, $ \tau\in [0,T]$ and $\delta \in (0, +\infty)$ the following holds true
\begin{eqnarray}\label{int1}
\vartheta_{a, \delta}^n(\tau) \le C_2(a,T) \bigg[\vartheta_{a, \delta}^n(0)+\frac{C_1}{a}\bigg].
\end{eqnarray}
\end{lemma}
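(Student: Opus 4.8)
The plan is to test the mild formulation \eqref{mtrun} against the indicator of an arbitrary measurable set $A$ with $|A|\le\delta$, localise to the window $(0,a)$, discard the (non-negative) loss integral, and then bound the surviving gain contribution by a multiple of $\vartheta_{a,\delta}^n$ plus a genuinely small remainder of order $C_1/a$; a Gronwall argument will then deliver \eqref{int1}. Concretely, fix such a set $A$. Multiplying \eqref{mtrun} by $\mathds{1}_A(\varsigma)$, integrating over $\varsigma\in(0,a)$, and throwing away the loss term (which is $\ge 0$ because $\mathfrak{K}_n,\mathfrak{f}^n\ge 0$) gives
\[
\int_0^a \mathds{1}_A(\varsigma)\mathfrak{f}^n(\varsigma,\tau)\,d\varsigma \le \vartheta_{a,\delta}^n(0) + G^n(\tau),
\]
where $G^n(\tau)=\int_0^\tau\!\int_0^a\!\int_0^\infty \mathds{1}_A(\varsigma)\,\mathfrak{K}_n(\varsigma+\varrho,\varrho)\,\mathfrak{f}^n(\varsigma+\varrho,\mathfrak{s})\,\mathfrak{f}^n(\varrho,\mathfrak{s})\,d\varrho\,d\varsigma\,d\mathfrak{s}$ is the gain integral restricted to $A\cap(0,a)$, and the first term is already $\le\vartheta_{a,\delta}^n(0)$.

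The heart of the argument is the estimate of $G^n$, which I would split according to the size of the absorbed cluster $\varrho$. In the inner $\varsigma$-integral I would substitute $y=\varsigma+\varrho$ for the parent cluster, using $\mathds{1}_A(\varsigma)=\mathds{1}_{A+\varrho}(y)$ together with the translation invariance of Lebesgue measure, so that $|A+\varrho|\le\delta$ as well. On the region $\varrho\le a$ all relevant cluster sizes remain bounded, hence by continuity $\mathfrak{K}_n$ is dominated by the finite constant $c(a):=\sup_{[0,2a]^2}\mathfrak{K}$; the translated concentration integral is then controlled by $\vartheta_{a,\delta}^n(\mathfrak{s})$ (the translated set of measure $\le\delta$ staying inside the bounded window), and after integrating the leftover factor $\mathfrak{f}^n(\varrho,\cdot)$ against \eqref{eql1.2} this part of $G^n$ is at most $c(a)\,\mathbb{G}_0\int_0^\tau \vartheta_{a,\delta}^n(\mathfrak{s})\,d\mathfrak{s}$. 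On the complementary region $\varrho>a$ both arguments $\varsigma+\varrho$ and $\varrho$ exceed $a\ge 1$, so \eqref{trunker2}--\eqref{alpha} yield $\mathfrak{K}_n(\varsigma+\varrho,\varrho)\le(1+A)\,r_n(\varsigma+\varrho)r_n(\varrho)$; bounding $\mathds{1}_A\le 1$ and translating shows this contribution is at most $(1+A)\int_0^\tau\big(\int_a^\infty r_n(\varsigma)\mathfrak{f}^n(\varsigma,\mathfrak{s})\,d\varsigma\big)^2 d\mathfrak{s}$, which by \eqref{eql1.1} with $M=a$ is no larger than $(1+A)\,C_1/a$.

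Collecting the two contributions and taking the supremum over all admissible $A$ on the left produces, for every $\tau\in[0,T]$, the integral inequality
\[
\vartheta_{a,\delta}^n(\tau)\le \vartheta_{a,\delta}^n(0) + (1+A)\frac{C_1}{a} + c(a)\,\mathbb{G}_0\int_0^\tau \vartheta_{a,\delta}^n(\mathfrak{s})\,d\mathfrak{s},
\]
and Gronwall's lemma then gives \eqref{int1} with $C_2(a,T)=\max\{1,1+A\}\,e^{c(a)\mathbb{G}_0 T}$. I expect the main obstacle to lie in the first (bounded) part of the gain estimate: a cluster of size $\varsigma\le a$ can be produced from a parent $\varsigma+\varrho$ that slightly exceeds $a$, so some care is needed to keep the translated concentration integral controlled by the functional $\vartheta_{a,\delta}^n$ on the correct size window. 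The translation invariance of $|\cdot|$ combined with the uniform bound of $\mathfrak{K}$ on compact squares is precisely what closes this step, while the subquadratic decay packaged in \eqref{eql1.1} is what converts the large-$\varrho$ tail into the vanishing remainder $C_1/a$.
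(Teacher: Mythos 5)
Your overall strategy --- testing \eqref{mtrun} against $\mathds{1}_A$, dropping the non-negative loss term, translating the set inside the gain term, splitting by cluster size, and closing with Gronwall --- is the same as the paper's, and your treatment of the region $\varrho>a$ is sound: there both kernel arguments exceed $a\ge 1$, so \eqref{trunker2}--\eqref{alpha} and \eqref{eql1.1} with $M=a$ do give the $(1+A)\,C_1/a$ remainder. The gap sits exactly where you suspected it, and the remedy you offer does not close it. On the region $\varrho\le a$ the parent size $y=\varsigma+\varrho$ ranges over $(0,2a]$, so after substitution the inner integral is $\int_0^{2a}\mathds{1}_{A+\varrho}(y)\,\mathfrak{f}^n(y,\mathfrak{s})\,dy$. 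Translation invariance does give $|A+\varrho|\le\delta$, but $\vartheta^n_{a,\delta}$ is by definition a supremum of integrals over $(0,a)$ only; the part of $A+\varrho$ landing in $(a,2a]$ is invisible to that functional, so the claimed bound by $\vartheta^n_{a,\delta}(\mathfrak{s})$ is false as written, and your integral inequality would actually involve $\vartheta^n_{2a,\delta}(\mathfrak{s})$ on the right, which does not close under Gronwall.

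Two repairs are available. The paper's route is to split by the \emph{parent} size rather than the absorbed size: when the parent $\varsigma$ exceeds $a$ it moves the indicator onto the absorbed cluster, writing $\mathds{1}_A(\varsigma-\varrho)=\mathds{1}_{\varsigma-A}(\varrho)$ with $\varrho\in(0,a)$, bounds the kernel there by $\Theta(a)\,\varsigma$ using \eqref{ker2}/\eqref{boundker2}, and pairs the factor $\varsigma$ with $\mathfrak{f}^n(\varsigma,\mathfrak{s})$ in the first-moment bound \eqref{masstrun}; the remaining factor $\int_0^a\mathds{1}_{\varsigma-A}(\varrho)\mathfrak{f}^n(\varrho,\mathfrak{s})\,d\varrho$ is then genuinely $\le\vartheta^n_{a,\delta}(\mathfrak{s})$, and the Gronwall coefficient becomes $(\mathit{K}+\Theta(a))C_3$. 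Alternatively, within your decomposition you can further split $\varrho\le a$ according to $y\le a$ versus $y\in(a,2a]$ and estimate the latter crudely by $\int_a^{2a}\mathfrak{f}^n(y,\mathfrak{s})\,dy\le a^{-1}\int_0^\infty y\,\mathfrak{f}^n_0(y)\,dy$ via \eqref{masstrun}; this yields an extra additive term of order $c(a)\mathbb{G}_0^2 T/a$, which can still be absorbed into $C_2(a,T)\,C_1/a$ because $C_2$ is permitted to depend on $a$, though at the price of a constant governed by $\sup_{[0,2a]^2}\mathfrak{K}$ instead of $\Theta(a)$. Either way the statement is recoverable; as submitted, the key step of the proposal is unjustified.
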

\begin{proof}

We begin the proof by letting  $a \in (1, +\infty)$, $\delta \in (0, +\infty)$ and consider subset $A \subset \mathbb{R}_+$, which is measurable with $|A| \le \delta$. Then from \eqref{ker2}, \eqref{mtrun}, and Fubini's theorem, we obtain
\begin{eqnarray*}
\int_0^a \mathds{1}_A(\varsigma) \mathfrak{f}^n(\varsigma,\tau) d\varsigma &=&\int_0
^a \mathds{1}_A (\varsigma)\mathfrak{f}_0^n(\varsigma) d \varsigma \\
& & +\int_0^\tau \int_0^a \int_0^{\infty} \mathds{1}_A (\varsigma) \mathfrak{K}_n(\varsigma+\varrho, \varrho) \mathfrak{f}^n(\varsigma +\varrho, \mathfrak{s}) \mathfrak{f}^n (\varrho, \mathfrak{s}) d\varrho d\varsigma d\mathfrak{s} \\
& &-\int_0^\tau \int_0^a \int_0^{\infty} \mathds{1}_A (\varsigma) \mathfrak{K}_n(\varsigma, \varrho) \mathfrak{f}^n(\varsigma, \mathfrak{s}) \mathfrak{f}^n (\varrho, \mathfrak{s}) d\varrho d\varsigma d\mathfrak{s}\\
& \le & \vartheta_{a, \delta}^n(0) + \int_0^\tau \int_a^{\infty} \int_0^{\infty} \mathds{1}_A (\varsigma-\varrho) \mathfrak{K}_n(\varsigma, \varrho) \mathfrak{f}^n(\varsigma, \mathfrak{s}) \mathfrak{f}^n (\varrho, \mathfrak{s}) d\varrho d\varsigma d\mathfrak{s}\\
& &+ \int_0^\tau \int_0^a \int_0^{\varsigma} [\mathds{1}_A (\varsigma-\varrho)-\mathds{1}_A (\varsigma)-\mathds{1}_A (\varrho)] \mathfrak{K}_n(\varsigma, \varrho) \mathfrak{f}^n(\varsigma, \mathfrak{s}) \mathfrak{f}^n (\varrho, \mathfrak{s}) d\varrho d\varsigma d\mathfrak{s} \\
& &-\int_0^\tau \int_a^{\infty} \int_0^{a} \mathds{1}_A (\varsigma) \mathfrak{K}_n(\varsigma, \varrho) \mathfrak{f}^n(\varsigma, \mathfrak{s}) \mathfrak{f}^n (\varrho, \mathfrak{s}) d\varrho d\varsigma d\mathfrak{s}.
\end{eqnarray*}
Set
\begin{eqnarray}\label{boundker1}
\sup_{(\varsigma, \varrho) \in (0,a)^2}\mathfrak{K}_n(\varsigma, \varrho) \le \mathit{K},
\end{eqnarray}
and
\begin{eqnarray}\label{boundker2}
\sup_{\varsigma \ge a} \ \sup_{\varrho \in (0,a)} \frac{\mathfrak{K}_n(\varrho, \varsigma)}{\varsigma} = \Theta(a),
\end{eqnarray}
where $\Theta(a) \to 0$ as $a \to \infty$. Then, we obtain
\begin{eqnarray*}
\int_0^a \mathds{1}_A(\varsigma) \mathfrak{f}^n(\varsigma,t) d\varsigma &\le & \vartheta_{a, \delta}^n(0)+\frac{C_1}{a} \\
& & +\mathit{K} \int_0^\tau \bigg[ \int_0^a \mathds{1}_{A+\varrho} (\varsigma) \mathfrak{f}^n(\varsigma,\mathfrak{s}) d\varsigma \int_0^{\varsigma} \mathfrak{f}^n(\varrho,\mathfrak{s}) d\varrho \bigg] d\mathfrak{s} \\
& & + \Theta(a) \int_0^\tau \bigg[ \int_a^{\infty}  \varsigma \mathfrak{f}^n(\varsigma,\mathfrak{s}) d\varsigma \int_0^{a} \mathds{1}_{\varsigma-A} (\varrho)\mathfrak{f}^n(\varrho,\mathfrak{s}) d\varrho \bigg] d\mathfrak{s} .
\end{eqnarray*}
Since
$$(A+\varrho)\cap (0, a) \subset (0,a) \ \ \mbox{and} \ \ |(A+\varrho)\cap (0, a)| \le |A+\varrho|=|A| \le \delta,$$
$$(\varsigma -A)\cap (0, a)\subset (0,a) \ \ \mbox{and} \ \ |(\varsigma -A)\cap (0, a)| \le |\varsigma -A|=|A| \le \delta,$$ then it follows from \eqref{masstrun} and \eqref{eql1.2} that
\begin{eqnarray*}
\int_0^a \mathds{1}_A(\varsigma) \mathfrak{f}^n(\varsigma,\tau) d\varsigma &\le & \vartheta_{a, \delta}^n(0)+\frac{C_1}{a}+ [\mathit{K}+\Theta(a)]C_3 \int_0^\tau \vartheta_{a, \delta}^n(\mathfrak{s}) d\mathfrak{s},
\end{eqnarray*}
 where $C_3=2\max\{\mathbb{G}_0, M_1^n(0)\}$. Hence, by Gronwall lemma we get
\begin{eqnarray*}
\vartheta_{a, \delta}^n(\tau) \le \bigg[\vartheta_{a, \delta}^n(0)+\frac{C_1}{a}\bigg]\int_0^\tau [\mathit{K}+\Theta(a)]C_3 d\mathfrak{s}.
\end{eqnarray*}
which implies \eqref{int1} with $C_2(a,T) \equiv [\mathit{K}+\Theta(a)]TC_3$.
\end{proof}

\medskip

\begin{lemma}\label{equicontinuity}
Let $T\in (0, +\infty)$ and $a \in (1, +\infty)$. For each $n \ge 1$ and $0 \le \mathfrak{s} \le \tau \le T$ following inequality holds
\begin{eqnarray}\label{cont1}
\int_0^a |\mathfrak{f}^n(\varsigma ,\tau)-\mathfrak{f}^n(\varsigma ,\mathfrak{s})| d\varsigma \le C_4(a)(\tau-\mathfrak{s}).
\end{eqnarray}
\end{lemma}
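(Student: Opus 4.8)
The plan is to read off the estimate directly from the mild formulation \eqref{mtrun}. Subtracting \eqref{mtrun} at the two times, for $0\le\mathfrak{s}\le\tau\le T$ one writes $\mathfrak{f}^n(\varsigma,\tau)-\mathfrak{f}^n(\varsigma,\mathfrak{s})$ as the integral over $(\mathfrak{s},\tau)$ of the gain rate minus the loss rate. Taking absolute values, integrating over $\varsigma\in(0,a)$ and applying the triangle inequality, it suffices to bound, uniformly in $n$ and in $\sigma\in[0,T]$, the two rate integrals
\[
G^n(\sigma)=\int_0^a\int_0^\infty \mathfrak{K}_n(\varsigma+\varrho,\varrho)\mathfrak{f}^n(\varsigma+\varrho,\sigma)\mathfrak{f}^n(\varrho,\sigma)\,d\varrho\,d\varsigma,
\]
\[
L^n(\sigma)=\int_0^a\int_0^\infty \mathfrak{K}_n(\varsigma,\varrho)\mathfrak{f}^n(\varsigma,\sigma)\mathfrak{f}^n(\varrho,\sigma)\,d\varrho\,d\varsigma
\]
by a constant depending only on $a$ and the data; integrating that bound over $(\mathfrak{s},\tau)$ produces the factor $(\tau-\mathfrak{s})$ and yields \eqref{cont1}. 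The essential constraint is that only \emph{pointwise-in-time} a priori bounds are admissible here, namely the number bound \eqref{eql1.2} and the mass bound \eqref{masstrun} (note $\int_0^\infty\varsigma\mathfrak{f}_0^n\le\mathbb{G}_0$); the time-integrated estimate \eqref{eql1.3} is too weak to control a rate.

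For the loss term $L^n$ I would split the inner integral at $\varrho=a$. On $\{\varrho<a\}$ the kernel is bounded by \eqref{boundker1}, so that part is $\le K\,\mathbb{G}_0^2$ after \eqref{eql1.2}. On $\{\varrho\ge a\}$, since $\varsigma\in(0,a)$ the subquadratic bound \eqref{boundker2} gives $\mathfrak{K}_n(\varsigma,\varrho)\le\Theta(a)\,\varrho$, so this part is $\le\Theta(a)\,\mathbb{G}_0\int_0^\infty\varrho\,\mathfrak{f}^n(\varrho,\sigma)\,d\varrho\le\Theta(a)\,\mathbb{G}_0^2$ by \eqref{masstrun}. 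Hence $L^n(\sigma)\le C(a)$ uniformly in $n$ and $\sigma$.

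The gain term is the delicate part. After the substitution $\varsigma\mapsto y=\varsigma+\varrho$ and Fubini it becomes $G^n(\sigma)=\int_0^\infty\mathfrak{f}^n(\varrho,\sigma)\int_\varrho^{\varrho+a}\mathfrak{K}_n(y,\varrho)\mathfrak{f}^n(y,\sigma)\,dy\,d\varrho$, and I again split at $\varrho=a$. On $\{\varrho<a\}$ one has $y\in(\varrho,\varrho+a)\subset(0,2a)$, so $\mathfrak{K}_n$ is bounded on $(0,2a)^2$ (as in \eqref{boundker1}) and this part is $\le C(a)\,\mathbb{G}_0^2$ by \eqref{eql1.2}. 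The genuine obstacle is the region $\{\varrho\ge a\}$: there both arguments $y>\varrho\ge a$ of $\mathfrak{K}_n(y,\varrho)$ are large, so the subquadratic condition \eqref{ker2} — which requires one bounded argument — does not apply, and the product part cannot be handled through the merely time-integrated bound \eqref{eql1.3}. To resolve this I exploit that on $\{\varrho\ge a\}$ both arguments stay $\ge a\ge1$, away from the origin where $r$ may be singular. There the product structure \eqref{trunker2}--\eqref{alpha} gives $\mathfrak{K}_n(y,\varrho)\le(1+A)\,r_n(y)\,r_n(\varrho)$, so this part factorises as $\le(1+A)\big(\int_a^\infty r_n(\varrho)\mathfrak{f}^n(\varrho,\sigma)\,d\varrho\big)^2$. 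The crucial observation is that \eqref{ker2} forces $r$ to be sublinear, $r(\varrho)/\varrho\to0$; since $r$ is continuous hence bounded on each compact subinterval of $(0,\infty)$, for every $\epsilon>0$ there is $C_\epsilon(a)$ with $r(\varrho)\le\epsilon\varrho+C_\epsilon(a)$ for $\varrho\ge a$. Combining this with \eqref{eql1.2} and \eqref{masstrun} yields the pointwise-in-time bound $\int_a^\infty r_n(\varrho)\mathfrak{f}^n(\varrho,\sigma)\,d\varrho\le(\epsilon+C_\epsilon(a))\,\mathbb{G}_0$, whence $G^n(\sigma)\le C(a)$ uniformly. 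Collecting the two rate bounds and integrating over $(\mathfrak{s},\tau)$ gives \eqref{cont1} with $C_4(a)=\sup_{\sigma\in[0,T]}\big(G^n(\sigma)+L^n(\sigma)\big)$, which is independent of $n$.
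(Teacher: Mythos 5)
Your proof is correct, and in the decisive step it takes a genuinely different route from the paper. Both arguments reduce the claim to bounding the coagulation rates integrated over $(\mathfrak{s},\tau)$ (the paper does this slightly more compactly by testing \eqref{wtrun} with $\phi=\mathds{1}_{[0,a]}\sign(\mathfrak{f}^n(\cdot,\tau)-\mathfrak{f}^n(\cdot,\mathfrak{s}))$, which packages gain and loss into a single integrand $\tilde\phi\mathfrak{K}_n\mathfrak{f}^n\mathfrak{f}^n$ with $|\tilde\phi|\le 3$), and both treat the regions ``both arguments small'' and ``one argument small'' identically, via \eqref{boundker1} with \eqref{eql1.2}, and via \eqref{boundker2} with \eqref{masstrun}, respectively. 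The difference is the region $(\varsigma,\varrho)\in(a,\infty)^2$: the paper controls it by the \emph{time-integrated} estimate \eqref{eql1.1}, which produces the additive term $(1+A)C_1/a$ in its final display --- a term that carries no factor of $(\tau-\mathfrak{s})$, so the paper's computation as written actually yields only $\int_0^a|\mathfrak{f}^n(\varsigma,\tau)-\mathfrak{f}^n(\varsigma,\mathfrak{s})|\,d\varsigma\le C(a)(\tau-\mathfrak{s})+(1+A)C_1/a$ rather than the clean Lipschitz bound \eqref{cont1} (this weaker two-term form is nonetheless exactly what is used afterwards to prove (P2), by first taking $a$ large and then $\tau-\mathfrak{s}$ small). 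You instead obtain a \emph{pointwise-in-time} bound on that region by factorising $\mathfrak{K}_n\le(1+A)r_nr_n$ on $[1,\infty)^2$ via \eqref{trunker2}--\eqref{alpha} and then observing that \eqref{ker2} forces $r(\varrho)/\varrho\to0$ (since $\omega_R(\varrho)\ge r(\varsigma_0)r(\varrho)/\varrho$ for any $\varsigma_0\le R$ with $r(\varsigma_0)>0$, the degenerate case $r\equiv0$ being trivial), so that $r(\varrho)\le\epsilon\varrho+C_\epsilon(a)$ on $[a,\infty)$ and $\int_a^\infty r_n\mathfrak{f}^n\,d\varrho\le(\epsilon+C_\epsilon(a))\mathbb{G}_0$ by \eqref{eql1.2} and \eqref{masstrun}. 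What your route buys is precisely the statement of the lemma as written, a genuine $O(\tau-\mathfrak{s})$ modulus uniform in $n$; what the paper's route buys is a bound independent of any sublinearity of $r$ beyond \eqref{eql1.1}, at the cost of the extra $C_1/a$ term. Your observation that \eqref{eql1.3} (and \eqref{eql1.1}) is ``too weak to control a rate'' is exactly the point at which the two proofs diverge.
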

\begin{proof}
Let us put $\phi(\varsigma)= \mathds{1}_{[0,a]}(\varsigma) \sign(\mathfrak{f}^n(\varsigma,\tau)-\mathfrak{f}^n(\varsigma,\mathfrak{s}))$ in \eqref{wtrun} to obtain
\begin{eqnarray*}
\int_0^a|\mathfrak{f}^n(\varsigma,\tau)-\mathfrak{f}^n(\varsigma ,\mathfrak{s})| d\varsigma = \int_{\mathfrak{s}}^\tau \int_0^{\infty} \int_0^{\varsigma} B(\varsigma,\varrho,s_1) \mathfrak{K}_n(\varsigma, \varrho) \mathfrak{f}^n(\varsigma, s_1) \mathfrak{f}^n (\varrho, s_1) d\varrho d\varsigma ds_1,
\end{eqnarray*}
where
\begin{eqnarray*}
 B(\varsigma,\varrho,s_1)&=& \mathds{1}_{[0,a]} (\varsigma-\varrho)\sign(\mathfrak{f}^n(\varsigma-\varrho,\tau)-\mathfrak{f}^n(\varsigma-\varrho,\mathfrak{s}))\\
 & &-\mathds{1}_{[0,a]} (\varsigma)\sign(\mathfrak{f}^n(\varsigma,\tau)-\mathfrak{f}^n(\varsigma,\mathfrak{s}))-\mathds{1}_{[0,a]} (\varrho)\sign(\mathfrak{f}^n(\varrho,\tau)-\mathfrak{f}^n(\varrho,\mathfrak{s})).
\end{eqnarray*}
Now, by symmetric property of $\mathfrak{K}_n$ and Fubini's theorem, we obtain
\begin{eqnarray*}
\int_0^a|\mathfrak{f}^n(\varsigma,\tau)-\mathfrak{f}^n(\varsigma ,\mathfrak{s})| d\varsigma &\le & \int_{\mathfrak{s}}^\tau \bigg[\int_0^{\infty} \int_0^{\infty} \mathfrak{K}_n(\varsigma, \varrho) \mathfrak{f}^n(\varsigma, s_1) \mathfrak{f}^n (\varrho, s_1) d\varrho d\varsigma \\
& &+2\int_0^{a} \int_0^{\infty} \mathfrak{K}_n(\varsigma, \varrho) \mathfrak{f}^n(\varsigma, s_1) \mathfrak{f}^n (\varrho, s_1) d\varrho d\varsigma\bigg] ds_1\\
&\le & \int_{\mathfrak{s}}^\tau \bigg[\underbrace{\int_0^{a} \int_0^{a} \mathfrak{K}_n(\varsigma, \varrho) \mathfrak{f}^n(\varsigma, s_1) \mathfrak{f}^n (\varrho, s_1) d\varrho d\varsigma}_{=:I_1(a,s_1)}\\
& &+\underbrace{\int_a^{\infty} \int_a^{\infty} \mathfrak{K}_n(\varsigma, \varrho) \mathfrak{f}^n(\varsigma, s_1) \mathfrak{f}^n (\varrho, s_1) d\varrho d\varsigma}_{=:I_2(a,s_1)} \\
& &+4\underbrace{\int_0^{a} \int_0^{\infty} \mathfrak{K}_n(\varsigma, \varrho) \mathfrak{f}^n(\varsigma, s_1) \mathfrak{f}^n (\varrho, s_1) d\varrho d\varsigma}_{=:I_3(a,s_1)}\bigg] ds_1.\\
\end{eqnarray*}

By using \eqref{boundker1}, we obtain the following bound on $I_1(a,s_1)$
$$I_1(a,s_1) \le \mathit{K} [\mathbb{G}_0]^2.$$
 Next, for $a \ge 1$, it can be easily follows from \eqref{trunker1}-\eqref{alpha1} that 
 $$\int_{\mathfrak{s}}^\tau I_2(a, s_1) ds_1 \le (1+A) \frac{C_1}{a}.$$
 Now, with the help of \eqref{boundker2} we get
 $$I_3(a,s_1) \le \Theta(a) [\mathbb{G}_0]^2.$$
 
Finally, by using bounds on $(I_i)_{1\le i \le 3}$, we conclude
$$\int_{\mathfrak{s}}^\tau I_2(a, s_1) ds_1 \le \mathit{K} [\mathbb{G}_0]^2(\tau-\mathfrak{s})+(1+A) \frac{C_1}{a}+\Theta(a) [\mathbb{G}_0]^2 (\tau-\mathfrak{s}),$$
 which complete the proof of Lemma \ref{equicontinuity} with constant $C_4$ which depends on $a$, $\mathit{K}$, $\mathfrak{f}_0^n$ and $A$.
\end{proof}

\medskip
\subsection{Weak compactness}
In this subsection, we establish appropriate bounds for the application of the Dunford-Pettis Theorem, followed by the equicontinuity of the sequence $(\mathfrak{f}^n)_{n\ge 1}$, for the Arzel\`{a}-Ascoli Theorem \cite[Theorem 1.3.2]{Vrabie}, which says that, we only need to make sure that the sequence $(\mathfrak{f}^n)_{n\ge 1}$, has two properties given below:\\

\begin{enumerate}
\item[(P1)] For each $\tau\in [0,T]$, the set $\{\mathfrak{f}^n(\tau), \ n\ge 1\}$ is weakly compact in $L^1(\mathbb{R}_+)$.\\

\item[(P2)] The set $\{\mathfrak{f}^n, \ n\ge 1\}$ is weakly equicontinuous in $L^1(\mathbb{R}_+)$ at every $\tau\in [0,T]$ (see \cite[Definition 1.3.1]{Vrabie}).
\end{enumerate}

For proving (P1), first fix $\tau \in [0,T]$. For each $\epsilon \in (0, +\infty)$ there exists $R_{\epsilon}$ (large enough) such that $$\frac{\mathbb{G}_0}{R_{\epsilon}} \le \frac{\epsilon}{2}.$$ Then for $n \ge 1$, it follows from \eqref{trunic3} and \eqref{masstrun} that
\begin{eqnarray}\label{dunfii}
\int_{R_{\epsilon}}^{\infty} \mathfrak{f}^n(\varsigma,\tau) d\varsigma \le \frac{\mathbb{G}_0}{R_{\epsilon}} \le \frac{\epsilon}{2}.
\end{eqnarray}
Next, let us assume a measurable subset $A \subset \mathbb{R}_+$ with $|A| \le \delta $. Then, from \eqref{uni-int} and \eqref{dunfii}, we obtain
\begin{eqnarray*}
\int_A \mathfrak{f}^n(\varsigma,\tau) d\varsigma &\le & \int_0^{R_{\epsilon}} \mathds{1}_A \mathfrak{f}^n(\varsigma,\tau) d\varsigma +\frac{\epsilon}{2}\\
& \le & C_2(R_{\epsilon},T) \bigg[\vartheta_{R_{\epsilon}, \delta}^n(0)+\frac{C_1}{a}\bigg] +\frac{\epsilon}{2}.
\end{eqnarray*}
Now, by using uniform integrability of $\vartheta_{R_{\epsilon}, \delta}^n(0)$, we get for every $\epsilon >0$ there exists $\delta(\epsilon)$ such that
\begin{eqnarray}\label{uni-int1}
\sup_{n\ge 1} \int_A \mathfrak{f}^n(\varsigma,\tau) d\varsigma \le \epsilon, \ \ \ \ \mbox{provided} \ \ \ |A| \le \delta(\epsilon).
\end{eqnarray}
Finally, Owing to \eqref{masstrun}, \eqref{eql1.2}, \eqref{dunfii} and \eqref{uni-int1} property (P1) holds true as result of Dunford-Pettis theorem.

Next, we proceed to prove property (P2). For this, first let $\epsilon >0$ and $\psi \in L^{\infty}(\mathbb{R}_+)$ such that $\|\psi\|_{L^{\infty}_{L^{\infty}(\mathbb{R}_+)}} \le 1$. Since \eqref{dunfii} holds true for all $\tau \in [0,T]$ and $ n\ge 1$, then from \eqref{eql1.2} and \eqref{dunfii} there is $R_{\epsilon} \ge1$ such that
\begin{eqnarray*}
\int_{R_{\epsilon}}^{\infty} \psi(\varsigma) \mathfrak{f}^n(\varsigma,\tau) d\varsigma \le \frac{\|\psi\|_{L^{\infty}(\mathbb{R}_+)} \mathbb{G}_0}{R_\epsilon} \le \frac{\epsilon}{2}.
\end{eqnarray*}
now, by using Lemma \ref{equicontinuity} for $0 \le \mathfrak{s} \le \tau \le T$, we conclude
\begin{eqnarray}
\int_0^{\infty} |\psi(\varsigma)[\mathfrak{f}^n(\varsigma ,\tau)-\mathfrak{f}^n(\varsigma ,\mathfrak{s})]| d\varsigma &\le & \|\psi\|_{L^{\infty}(\mathbb{R}_+)}\int_0^{R_{\epsilon}} |\mathfrak{f}^n(\varsigma ,\tau)-\mathfrak{f}^n(\varsigma ,\mathfrak{s})| d\varsigma +\frac{\epsilon}{2}\nonumber\\
&\le & C_4(R_{\epsilon})(\tau-\mathfrak{s}) +\frac{\epsilon}{2} \le \epsilon, \label{cont2}
\end{eqnarray}
where $$|\tau-\mathfrak{s}| \le \frac{\epsilon}{2C_4(R_{\epsilon})}.$$

The time equicontinuity of the family $\{\mathfrak{f}^n(\tau), \tau\in [0,T]\}$ in $L^1(\mathbb{R}_+)$ has been implied from the estimate \eqref{cont2}.
Thus, a refined version of the Arzel\`{a}-Ascoli Theorem (see \cite[Theorem 2.1]{stewart1}) implies the existence of
a subsequence $(\mathfrak{f}^n)$ (not relabelled) and a function $\mathfrak{f} \in L^{\infty}((0,T); L^1(\mathbb{R}_+))$ such that
\begin{eqnarray}\label{wconv1}
\mathfrak{f}^n \to \mathfrak{f} ~~~ \text{in} ~~ C([0,T];L^1(\mathbb{R}_+)_w),
\end{eqnarray}
which means
\begin{eqnarray}\label{wconv2}
\lim_{n\to \infty} \sup_{\tau\in[0,T]}\bigg\{ \bigg| \int_0^{\infty} \{\mathfrak{f}^n(\varsigma,\tau)-\mathfrak{f}(\varsigma,\tau)\}
\psi(\varsigma) d\varsigma \bigg| \bigg\}=0,
\end{eqnarray}
for all $T>0$ and $\psi \in L^{\infty}(\mathbb{R}_+)$.
Non-negativity of $\mathfrak{f}^n(.,\tau), \forall~n \in \mathbb{N},$ implies that, for every $\tau \in [0, T],$
$$ \mathfrak{f}(.,\tau) \ge 0 ~~ \text{a.e. in} ~~\mathbb{R}_+.$$

 Finally, applying the weak convergence of $\{\mathfrak{f}^n(\tau)-\mathfrak{f}^n(\mathfrak{s})\}$ to $\{\mathfrak{f}(\tau)-\mathfrak{f}^n(\mathfrak{s})\}$ from \eqref{wconv1}, property (P1), and taking $\psi(\varsigma)=\sign (\mathfrak{f}^n(\varsigma,t_2)-\mathfrak{f}^n(\varsigma,t_1))$ in \eqref{cont2}, we conclude that
$$\| \mathfrak{f}(\tau)-\mathfrak{f}(\mathfrak{s})\|_{L^1(\mathbb{R}_+)} \le \epsilon.$$
Hence, we have
\begin{eqnarray} \label{space}
\mathfrak{f} \in \mathcal{C}([0, T]; L^1(\mathbb{R}_+)),
\end{eqnarray}
where $\mathcal{C}([0, T]; L^1(\mathbb{R}_+))$ is the space of all continuous functions from $[0,T]$ to $L^1(\mathbb{R}_+)$.

\subsection{Passing to the limit}
In this subsection, we provide arguments to support the claim that the limit function $\mathfrak{f}$ is, in fact, a weak solution to \eqref{crbk}-\eqref{ic} according to the concept of Definition \ref{defmild}. The following calculation is motivated from the \cite[Section 3]{Laurencot2}. Let us now consider $T \in (0,\infty )$, $a \in (0, \infty)$ and $M > a$. Then from \eqref{eql1.1} and property of $\Xi(\varsigma)$, we have for $n \ge M$
\begin{eqnarray*}
\int_0^T \bigg(\int_a^M r(\varsigma) \mathfrak{f}^n(\varsigma, \mathfrak{s}) d\varsigma \bigg)^2 d\mathfrak{s} \le \frac{C_1}{a}.
\end{eqnarray*}
Since $r \mathds{1}_{[a,M]} \in L^{\infty}(0, \infty)$, then it follows from \eqref{eql1.2}, \eqref{wconv1} and Lebesgue dominated convergence theorem that
\begin{eqnarray*}
\int_0^T \bigg(\int_a^M r(\varsigma) \mathfrak{f}(\varsigma, \mathfrak{s}) d\varsigma \bigg)^2 d\mathfrak{s} \le \frac{C_1}{a}.
\end{eqnarray*}
As $M >a$ is arbitrary then we get
\begin{eqnarray}\label{b1}
\int_0^T \bigg(\int_a^{\infty} r(\varsigma) \mathfrak{f}(\varsigma, \mathfrak{s}) d\varsigma \bigg)^2 d\mathfrak{s} \le \frac{C_1}{a}.
\end{eqnarray}
Finally, by using \eqref{masstrun}, \eqref{eql1.2}, \eqref{eql1.3} and \eqref{wconv1}, we conclude that for $\tau \in [0,T]$
\begin{eqnarray}
\sup_{t\in [0,T]}\|\mathfrak{f}(\tau)\|_{0,1} \le C_5, \label{norm}\\
\int_0^\tau \bigg(\int_0^{\infty} r(\varsigma) \mathfrak{f}( \varsigma, \mathfrak{s}) d\varsigma \bigg)^2 d\mathfrak{s} \le C_6, \label{bound3}\\
\int_0^{\infty} \varsigma \mathfrak{f}(\varsigma, \tau)d\varsigma \le \int_0^{\infty} \varsigma \mathfrak{f}_0(\varsigma) d\varsigma. \label{mass1}
\end{eqnarray}
Then it follows from \eqref{ker}, \eqref{ker1}, \eqref{norm}, \eqref{bound3} and Fubini theorem that
\begin{eqnarray}\label{space1}
(\varsigma ,\varrho, \mathfrak{s})\mapsto \mathfrak{K}(\varsigma,\varrho) \mathfrak{f}(\varsigma, \mathfrak{s}) \mathfrak{f}(\varrho,\mathfrak{s}) \in L^1(\mathbb{R}_+^2 \times (0,T)).
\end{eqnarray}

Now, let us consider a function $\phi \in L^{\infty}(\mathbb{R}_+)$ with $\|\phi \|_{L^{\infty}} \le 1$ and $\tau \in(0,\infty)$. Then from \eqref{trunic2} and \eqref{wconv1}, we obtain
\begin{eqnarray}\label{limit1}
\lim_{n \to \infty} \int_0^{\infty} \{\mathfrak{f}^n(\varsigma,\tau)-\mathfrak{f}_0^n(\varsigma)\} \phi(\varsigma) d\varsigma=\int_0^{\infty} \{\mathfrak{f}(\varsigma,\tau)-\mathfrak{f}_0(\varsigma)\} \phi(\varsigma) d\varsigma.
\end{eqnarray}
We next choose $a>1$, then for $n \ge 1$ and $0 \le s \le \tau$, the right hand side of \eqref{wtrun} can be written as sum of $\{B_{i,n}\}_{1 \le i \le 3}$ which are defined as
\begin{eqnarray*}
B_{1,n}(a,s)&:=& \int_0^a \int_0^{\varsigma} \tilde{\phi}(\varsigma, \varrho) \mathfrak{K}_n(\varsigma, \varrho) \mathfrak{f}^n(\varsigma, \mathfrak{s}) \mathfrak{f}^n(\varrho, \mathfrak{s}) d\varrho d\varsigma, \\
B_{2,n}(a, \mathfrak{s})&:=& \int_a^{\infty} \int_0^{a} \tilde{\phi}(\varsigma, \varrho) \mathfrak{K}_n(\varsigma, \varrho) \mathfrak{f}^n(\varsigma, \mathfrak{s}) \mathfrak{f}^n(\varrho,\mathfrak{s}) d\varrho d\varsigma, \\
B_{3,n}(a,\mathfrak{s})&:=& \int_a^{\infty} \int_a^{\varsigma} \tilde{\phi}(\varsigma, \varrho) \mathfrak{K}_n(\varsigma, \varrho) \mathfrak{f}^n(\varsigma, \mathfrak{s}) \mathfrak{f}^n(\varrho,\mathfrak{s}) d\varrho d\varsigma.
\end{eqnarray*}
Also, we define
\begin{eqnarray*}
B_{1}(a,\mathfrak{s})&:=& \int_0^a \int_0^{\varsigma} \tilde{\phi}(\varsigma, \varrho) \mathfrak{K}(\varsigma, \varrho) \mathfrak{f}(\varsigma, \mathfrak{s}) \mathfrak{f}(\varrho, \mathfrak{s}) d\varrho d\varsigma, \\
B_{2}(a,\mathfrak{s})&:=& \int_a^{\infty} \int_0^{a} \tilde{\phi}(\varsigma, \varrho) \mathfrak{K}(\varsigma, \varrho) \mathfrak{f}(\varsigma, \mathfrak{s}) \mathfrak{f}(\varrho, \mathfrak{s}) d\varrho d\varsigma, \\
B_{3}(a,\mathfrak{s})&:=& \int_a^{\infty} \int_a^{\varsigma} \tilde{\phi}(\varsigma, \varrho) \mathfrak{K}(\varsigma, \varrho) \mathfrak{f}(\varsigma, \mathfrak{s}) \mathfrak{f}(\varrho,\mathfrak{s}) d\varrho d\varsigma,
\end{eqnarray*}
here, $\tilde{\phi}$ is defined as in \eqref{wtrun}. For, $n \ge a$ and $(\varsigma, \varrho) \in (0,a)^2$, we have
$$\mathfrak{K}_n(\varsigma,\varrho)=\mathfrak{K}(\varsigma,\varrho).$$

Following this, we must demonstrate convergence of $B_{1,n}(a,s)$, $B_{2,n}(a,s)$ and $B_{3,n}(a,s)$ towards $B_{1}(a,s)$, $B_{2}(a,s)$ and $B_{3}(a,s)$, respectively. 

Thanks to \cite[Lemma 2.9]{Laurencot2}, we can easily obtain
\begin{eqnarray*}
\lim_{n \to +\infty} B_{1,n}( a, \mathfrak{s}) = B_{1}( a, \mathfrak{s}).
\end{eqnarray*}

The preceding identity, togather with \eqref{eql1.2} and the dominated convergence theorem, imply that
\begin{eqnarray}\label{limitB1}
\lim_{n \to +\infty} \int_0^\tau B_{1,n}( a, \mathfrak{s}) d\mathfrak{s}= \int_0^\tau B_{1}( a, \mathfrak{s}) d\mathfrak{s}.
\end{eqnarray}
Next, by using \eqref{masstrun}, \eqref{eql1.2} and \eqref{boundker2}, we obtain
\begin{eqnarray*}
\int_0^\tau B_{2,n}(a,\mathfrak{s}) d\mathfrak{s} &\le & \|\phi\|_{L^{\infty}(\mathbb{R}_+)} \int_0^\tau \int_a^{\infty} \int_0^{a} \mathfrak{K}_n(\varsigma, \varrho) \mathfrak{f}^n(\varsigma, \mathfrak{s}) \mathfrak{f}^n(\varrho,\mathfrak{s}) d\varrho d\varsigma d\mathfrak{s} \\
& \le & C_{7} \Theta(a) \|\phi\|_{L^{\infty}(\mathbb{R_+})}.
\end{eqnarray*}
Now, it follows from \eqref{trunker2} and \eqref{eql1.1} that
\begin{eqnarray*}
\int_0^\tau B_{3,n}(a,\mathfrak{s}) ds & \le & (1+A) \|\phi \|_{L^{\infty}(\mathbb{R}_+)} \int_0^\tau \bigg( \int_a^{\infty} r_n(\varsigma) \mathfrak{f}^n(\varsigma, \mathfrak{s}) d\varsigma \bigg)^2 d\mathfrak{s}  \\
&\le & \frac{C_7 \|\phi \|_{L^{\infty}(\mathbb{R}_+)}}{a}.
\end{eqnarray*}
Then, we get
\begin{eqnarray}\label{limit2}
\int_0^\tau |B_{2,n}(a,\mathfrak{s})+B_{3,n}(a,\mathfrak{s})| ds \le  C_8 \|\phi \|_{L^{\infty}(\mathbb{R}_+)}(a^{-1} + \Theta(a)).
\end{eqnarray}
In the same way we can from \eqref{ker}, \eqref{ker1}, and \eqref{b1} that
\begin{eqnarray}\label{limit3}
\int_0^\tau |B_{2}(a,\mathfrak{s})+B_{3}(a,\mathfrak{s})| d\mathfrak{s} \le  C_{9} \|\phi \|_{L^{\infty}(\mathbb{R}_+)}(a^{-1} + \Theta(a)).
\end{eqnarray}
Therefore, \eqref{limitB1}-\eqref{limit3} implies that
\begin{eqnarray}\label{limit4}
\limsup_{n \to +\infty} \bigg|\int_0^\tau \sum_{i=1}^{3}(B_{i,n}(a,\mathfrak{s})-B_{i}(a,\mathfrak{s})) d\mathfrak{s} \bigg| \le C_{10}\|\phi \|_{L^{\infty}(\mathbb{R}_+)}(a^{-1} + \Theta(a)).
\end{eqnarray}
Since left hand side of \eqref{limit4} is independent of $a >0$ and $\lim_{a \to \infty} \Theta(a) =0$, so we obtain
\begin{eqnarray}\label{limit5}
& & \hspace{-3cm}\lim_{n \to +\infty} \int_0^\tau \int_0^{\infty}\int_0^{\varsigma} \tilde{\phi}(\varsigma, \varrho) \mathfrak{K}_n(\varsigma, \varrho) \mathfrak{f}^n(\varsigma, \mathfrak{s}) \mathfrak{f}^n (\varrho, \mathfrak{s}) d\varrho d\varsigma d\mathfrak{s}\nonumber \\
 &= & \int_0^\tau \int_0^{\infty}\int_0^{\varsigma} \tilde{\phi}(\varsigma, \varrho) \mathfrak{K}(\varsigma, \varrho) \mathfrak{f}(\varsigma, \mathfrak{s}) \mathfrak{f} (\varrho, \mathfrak{s}) d\varrho d\varsigma d\mathfrak{s}.
\end{eqnarray}
By letting $n \to \infty$ in \eqref{wtrun} and owing to \eqref{limit1} and \eqref{limit5} we conclude that
\begin{eqnarray*}
\int_0^{\infty} \phi(\varsigma)(\mathfrak{f}(\varsigma,\tau)-\mathfrak{f}_0(\varsigma)) d\varsigma &=& \int_0^\tau \int_0^{\infty}\int_0^{\varsigma} \tilde{\phi}(\varsigma, \varrho) \mathfrak{K}(\varsigma, \varrho) \mathfrak{f}(\varsigma, \mathfrak{s}) \mathfrak{f} (\varrho, \mathfrak{s}) d\varrho d\varsigma d\mathfrak{s}\\
&=& \int_0^{\infty} \phi(\varsigma) \bigg[\int_0^\tau \int_0^{\infty} \mathfrak{K}(\varsigma+\varrho,\varrho) \mathfrak{f}(\varsigma+\varrho,\mathfrak{s}) \mathfrak{f}(\varrho,\mathfrak{s}) d\varrho \\
& & - \int_0^\tau \int_0^{\infty} \mathfrak{K}(\varsigma,\varrho) \mathfrak{f}(\varsigma,\mathfrak{s}) \mathfrak{f}(\varrho,\mathfrak{s}) d\varrho\bigg] d\mathfrak{s} d\varsigma .
\end{eqnarray*}
Due to the fact that this equality holds true for each $\phi \in L^{\infty}(0, +\infty)$, we have demonstrated that $\mathfrak{f}$ satisfies Definition \ref{defmild}. Hence Theorem \ref{thmexist} proved.

\medskip

\textbf{Acknowledgements:} PV would like to thank the Council of Scientific and Industrial Research (CSIR), India for granting the Ph.D. fellowship through Grant No. 09/143(0901)/2017-EMR-I.

\end{document}